\newcommand{\RR}{\mathbb{R}}
\newcommand{\p}{{\bf p}}
\newcommand{\dd}{{\bf d}}
\newtheorem{thm}{Theorem}
\newtheorem{lem}[thm]{Lemma}
\newtheorem{prop}[thm]{Proposition}
\newtheorem{conj}[thm]{Conjecture}
\theoremstyle{definition}
\newtheorem{defn}[thm]{Definition}
\theoremstyle{definition}
\newtheorem*{remark}{Remark}
\newtheorem*{remarks}{Remarks}
\newtheorem{example}[thm]{Example}
\title{On the $k$-volume rigidity of a simplicial complex in $\mathbb{R}^d$}
\author[2]{Alan Lew\thanks{\href{mailto:alanlew@andrew.cmu.edu}{alanlew@andrew.cmu.edu}. 
}}
\author[1]{Eran Nevo\thanks{\href{mailto:nevo@math.huji.ac.il}{nevo@math.huji.ac.il}. 
E. Nevo was partially supported by the Israel Science Foundation grants ISF-2480/20 and ISF-687/24.
}}
\author[1]{Yuval Peled\thanks{\href{mailto:yuval.peled@mail.huji.ac.il}{yuval.peled@mail.huji.ac.il}. 
Y. Peled was partially supported by the Israel Science Foundation grant ISF-3464/24.
}}
\author[1,3]{Orit E. Raz\thanks{\href{mailto:oritraz@mail.huji.ac.il}{oritraz@mail.huji.ac.il}. O.\;E. Raz was partially supported by the Charles Simonyi Endowment.}}
\affil[1]{Einstein Institute of Mathematics,
 Hebrew University, Jerusalem~91904, Israel}
 \affil[2]{Dept. Math. Sciences, Carnegie Mellon University, Pittsburgh, PA 15213, USA}
\affil[3]{Institute for Advanced Study, Princeton, NJ 08540, USA}
	\date{}
\begin{document}
\maketitle
\begin{abstract}
We define a generic rigidity matroid for $k$-volumes of a simplicial complex in $\RR^d$, and prove that for $2\leq k \leq d-1$ it has the same rank as the classical generic $d$-rigidity matroid on the same vertex set (namely, the case $k=1$). This is in contrast with the $k=d$ case, previously studied by Lubetzky and Peled, which presents a different behavior. We conjecture a characterization for the bases of this matroid in terms of $d$-rigidity of the $1$-skeleton of the complex and a combinatorial Hall condition on incidences of edges in $k$-faces.   

\end{abstract}

\section{Introduction}

A simplicial complex $X$ is a family of subsets of some finite set $V$, such that if $\sigma\in X$, then $\tau\in X$ for all $\tau\subset \sigma$. The elements of $X$ are called the \emph{simplices} or \emph{faces} of $X$. The \emph{dimension} of a face $\sigma\in X$ is defined as $|\sigma|-1$. The dimension of the complex $X$ is the maximum dimension of a simplex in $X$. For a $k$-dimensional simplicial complex $X$ and $i=0,1,\ldots,k$, we denote by $X_i$ the set of $i$-dimensional faces of $X$. The set $X_0$ of $0$-dimensional faces of $X$ is called the \emph{vertex set} of $X$.

Let $X$ be a $k$-dimensional simplicial complex, and let $\p\in (\mathbb{R}^d)^{|X_0|}$ be an embedding of its vertex set $X_0$ in $\mathbb{R}^d$. For $0\le i\le k$ and $\tau\in X_i$, we denote by $\p(\tau)$ the convex hull of the image of $\tau$ under $\p$, and by ${\rm vol}_{i}(\tau)$ its $i$-dimensional volume. 

Let $v_X:(\RR^d)^{|X_0|}\to\RR^{|X_k|}$ map an embedding of $X_0$ in $\RR^d$ to the vector of squared $k$-volumes of its $k$-dimensional simplices. 
That is, for $\p\in (\RR^d)^{|X_0|}$, $v_X(\p)\in\RR^{|X_k|}$ is defined by
\[
    v_X(\p)_{\sigma}= {\rm vol}_k(\sigma)^2
\]
for all $\sigma\in X_k$. We define the {\it $(k,d)$-volume rigidity matrix} of $X$ at an embedding $\p\in (\RR^d)^{|X_0|}$ to be 
$${\cal B}(X,\p):=J_{v_X}(\p),$$
the Jacobian matrix of $v_X$ at the point $\p$, which is an $|X_k|\times d|X_0|$ matrix.

We index the rows of ${\cal B}(X,\p)$ by simplices in $X_k$, and every $d$ consecutive columns of ${\cal B}(X,\p)$ by the vertices in $X_0$. Using the fact that for every  $\sigma\in X_k$ and $v\in \sigma$, ${\rm vol}_k(\sigma)={\rm vol}_{k-1}(\sigma\setminus\{v\})\cdot h/k!$, where $h$ is the altitude of $\p(\sigma)$ with respect to $\p(v)$, it is easy to check that
\begin{equation}\label{Bexplicit}
{\cal B}(X,\p)_{\sigma,v}=
\begin{cases}
\frac{2 {\rm vol}_k(\sigma)}{k!}\cdot {\rm vol}_{k-1}(\sigma\setminus\{v\}) N_{\sigma,v}& \quad v\in \sigma,\\
0& \quad \text{otherwise},
\end{cases}
\end{equation}
for all $\sigma\in X_k$ and $v\in X_0$, where $N_{\sigma,v}\in\mathbb{R}^d$ stands for the unit vector parallel to the altitude of $\p(\sigma)$ with respect to $\p(v)$ (pointing from the $(k-1)$-flat spanned by $\p(\sigma\setminus \{v\})$ to $\p(v)$).

We define the \emph{$(k,d)$-volume rigidity matroid} of $X$, denoted by ${\cal M}_{k,d}(X)$, to be the matroid whose elements are the $k$-simplices of $X$ and its independent sets 
correspond to linearly independent sets of rows of the matrix ${\cal B}(X,\p)$, for a generic\footnote{By generic we mean that the $d|X_0|$ entries defining $\p$ are algebraically independent over the field of rationals.} $\p$. Note that ${\cal M}_{1,d}(X)$ is the standard $d$-rigidity matroid of the graph $G=(X_0,X_1)$; 
see e.g.~\cite{book:Connelley-Simon,book:Graver-Servatius-Servatius,chapter:Jordan,AR78,AR79} for relevant background.  The matroid ${\cal M}_{d,d}(X)$, corresponding to the case $k=d$, 
was introduced in~\cite[Appendix A]{lubetzky2023threshold} (see also \cite{borcea2013realizations}) and further  
 studied in \cite{BulavkaNevoPeled}.

Let $\Delta_{n,k}$ denote the complete $k$-dimensional simplicial complex on $n$ vertices. We introduce the following definition of $k$-volume rigidity in $\RR^d$.

\begin{defn}
Let $X$ be a $k$-dimensional simplicial complex on $n$ vertices, and let $d\ge k$. We say that $X$ is \emph{$k$-volume rigid in $\RR^d$} if 
$${\rm rank}({\cal M}_{k,d}(X))={\rm rank}({\cal M}_{k,d}(\Delta_{n,k})).$$ 
\end{defn}

The following is our main result.

\begin{thm}\label{main1}
Let $d\ge 2$. Let $k,n$ such that either 
(i) $k=d-1$ and  $n\ge d+2$, or (ii) $1\le k\le d-2$ and $n\ge d+1$.
Then,
$$
{\rm rank}({\cal M}_{k,d}(\Delta_{n,k}))=dn-\binom{d+1}{2}.
$$
\end{thm}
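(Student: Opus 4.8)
The plan is to show that for a generic embedding $\p$ the only infinitesimal motions $\q$ lying in the kernel of ${\cal B}(\Delta_{n,k},\p)$ are the trivial ones (the infinitesimal Euclidean isometries), which for $n\ge d+1$ form a space of dimension $\binom{d+1}{2}$; equivalently, that ${\rm rank}({\cal B}(\Delta_{n,k},\p))=dn-\binom{d+1}{2}$. I would establish the matching upper and lower bounds separately. Throughout, note that the entries of ${\cal B}=J_{v_X}$ are polynomials in $\p$, since each coordinate $v_X(\p)_\sigma={\rm vol}_k(\sigma)^2$ is a polynomial in $\p$ (a Gram/Cayley--Menger determinant); hence ${\rm rank}({\cal B}(\Delta_{n,k},\p))$ is lower semicontinuous in $\p$ and its generic value is at least its value at any particular embedding.

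For the upper bound I would use the Cayley--Menger expansion: for every $k$-face $\sigma$ the quantity ${\rm vol}_k(\sigma)^2$ is a polynomial in the squared edge lengths $\{\|\p_u-\p_w\|^2: u,w\in\sigma\}$. By the chain rule, the row of ${\cal B}(\Delta_{n,k},\p)$ indexed by $\sigma$ is a linear combination of the rows of ${\cal B}(\Delta_{n,1},\p)$ indexed by the edges contained in $\sigma$. Since $n\ge k+1$, every edge lies in some $k$-face, so the full row space of ${\cal B}(\Delta_{n,k},\p)$ is contained in that of ${\cal B}(\Delta_{n,1},\p)$, which is the classical $d$-rigidity matrix of $K_n$. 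As $K_n$ is generically $d$-rigid for $n\ge d+1$, this gives ${\rm rank}({\cal M}_{k,d}(\Delta_{n,k}))\le {\rm rank}({\cal M}_{1,d}(\Delta_{n,1}))=dn-\binom{d+1}{2}$.

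For the lower bound I would induct on $n$ by vertex addition. Passing from $\Delta_{n,k}$ to $\Delta_{n+1,k}$ through a new vertex $w$, the old $k$-faces do not contain $w$ and therefore vanish on the $d$ columns indexed by $w$; thus ${\cal B}(\Delta_{n+1,k},\p)$ is block lower-triangular with diagonal blocks ${\cal B}(\Delta_{n,k},\p')$ (where $\p'$ is the restriction to the old vertices) and $C$, the submatrix of the new rows on the columns of $w$. By the elementary bound for such matrices, ${\rm rank}({\cal B}(\Delta_{n+1,k},\p))\ge {\rm rank}({\cal B}(\Delta_{n,k},\p'))+{\rm rank}(C)$. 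The row of $C$ indexed by $\sigma=\{w\}\cup\tau$ is proportional to $N_{\sigma,w}$, the unit normal from $\p(w)$ to the flat $\mathrm{aff}(\p(\tau))$, as $\tau$ ranges over the $(k-1)$-faces on the old vertices. For a generic $\p$ and $n$ in the stated range these directions span $\RR^d$, so ${\rm rank}(C)=d$ (this is a general-position statement that can be checked at a single convenient configuration, and it reduces to the classical vertex-addition fact when $k=1$). Since $\p'$ is generic whenever $\p$ is, this shows the generic rank grows by at least $d$ at each step; combined with the upper bound, an induction started at the threshold value of $n$ yields ${\rm rank}({\cal B})=dn-\binom{d+1}{2}$ for all larger $n$.

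The \emph{crux} is the base case: the threshold $n=d+2$ when $k=d-1$, and $n=d+1$ when $k\le d-2$. There the upper bound already forces ${\rm rank}\le dn-\binom{d+1}{2}$, so what remains is to prove that the smallest complete $k$-complex has no nontrivial infinitesimal $k$-volume flex. I would handle this by exhibiting a single convenient embedding $\p^{*}$ — for instance the regular simplex when $n=d+1$, or points on the moment curve — and showing ${\rm rank}({\cal B}(\Delta_{n,k},\p^{*}))=dn-\binom{d+1}{2}$ directly; since the generic rank dominates the rank at any particular embedding, this suffices. For the regular simplex the $S_n$-symmetry can be exploited to split ${\cal B}$ into isotypic components and thereby determine its rank, while for $n=d+2$, $k=d-1$ a dimension count predicts exactly one self-stress, which one expects to be governed by the unique affine (Radon) dependence among the $d+2$ points, so the task is to verify that no further dependency occurs. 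Establishing this base case is where the genuine difficulty lies; the Cayley--Menger upper bound and the vertex-addition step are comparatively routine.
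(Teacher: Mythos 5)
Your overall skeleton coincides with the paper's: the upper bound via the Cayley--Menger/chain-rule factorization through the rigidity matrix of $K_n$, the inductive lower bound via a vertex-addition lemma (block lower-triangular structure, with the new block spanned by the altitude directions $N_{\sigma,w}$, verified at a convenient configuration and transferred to generic $\p$ by semicontinuity), and the reduction to the base cases $n=d+2$ (for $k=d-1$) and $n=d+1$ (for $k\le d-2$). All of that is sound and matches the paper. The problem is that you stop exactly at what you yourself call the crux: the base case is never proven. You list candidate strategies (isotypic decomposition of ${\cal B}$ under the $S_n$-symmetry of the regular simplex; for $n=d+2$, a dimension count predicting one self-stress coming from the Radon dependence, plus a hoped-for verification that ``no further dependency occurs''), but none of these is carried out, and it is precisely this verification that constitutes the theorem's content.

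For comparison, here is what the missing step actually requires in the paper. Using ${\cal B}={\cal C}\cdot R$, the base case reduces to a rank statement about the Jacobian ${\cal C}$ of the map from squared edge lengths to squared $k$-volumes, evaluated at the symmetric configuration (regular $d$-simplex for $n=d+1$; regular $d$-simplex plus its centroid for $n=d+2$). Symmetry forces the entries of ${\cal C}$ to take only one value $\alpha$ (resp.\ three values $\alpha,\beta,\gamma$ in the $n=d+2$ case), so after row and column scaling ${\cal C}$ becomes the subset-inclusion matrix $A_{k+1,2}^{n}$ between $(k+1)$-subsets and $2$-subsets of $[n]$; its full rank (invertibility when $n=d+2$, $k=d-1$, since then it is square) is a nontrivial combinatorial theorem of Gottlieb. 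One also needs a separate argument (the paper's Lemma~\ref{lem:nonzero}, a computation with the Cayley--Menger determinant showing the volume, as a function of one squared edge length, is a quadratic with nonzero leading coefficient) to guarantee that the scaling factors $\alpha,\beta,\gamma$ are nonzero -- a point your sketch does not even flag. Without the reduction to inclusion matrices and Gottlieb's theorem (or a genuine execution of your proposed isotypic-component analysis, which you would still have to do for the non-transitive configuration when $n=d+2$), the proposal establishes only the upper bound and the inductive step, not the theorem.
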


The proof of Theorem \ref{main1} consists of two main steps. First, we apply an inductive argument to reduce the problem to the case $n=d+2$ (when $k=d-1$) or $n=d+1$ (when $1\le k\le d-2$). Then, we analyze these base cases by showing, in each case, that the corresponding $(k,d)$-volume rigidity matrix (for a specific choice of embedding $\p$) is tightly related to certain ``subset inclusion matrix", introduced by Goettlieb in \cite{gottlieb1966certain} (and independently by Graver and Jurkat in \cite{graver1973module}, and by Wilson in \cite{wilson1973necessary}).

\begin{remarks}\,
\begin{enumerate}[leftmargin=*]
\item The case $k=1$ is a well-known result about the standard $d$-rigidity matroid (see, e.g, \cite{chapter:Jordan}).
The case $k=d$ behaves differently. Indeed, it was shown by Lubetzky and Peled~\cite[Appendix A]{lubetzky2023threshold} that, for all $n\ge d+1$, 
$${\rm rank}({\cal M}_{d,d}(\Delta_{n,d}))=dn-(d^2+d-1).$$

\item The case $k=d-1$ and $n=d+1$, excluded from Theorem \ref{main1}, follows by similar arguments. Indeed, it is easy to show that in this case we have ${\rm rank}({\cal M}_{d-1,d}(\Delta_{d+1,d-1}))=d+1$ (see Proposition \ref{prop:k=d-1,n=d+1}).

\item Let us mention that a related, but different, notion of rigidity for simplicial complexes was studied by Lee in \cite{lee1996pl} (building on previous unpublished work by Filliman), and further developed by Tay, White, and Whiteley  in \cite{tay1995skeletal,tay1995skeletal2} (see also \cite{tay2000homological}).

\end{enumerate}
\end{remarks}

The paper is organized as follows. In Section \ref{sec:prelim} we prove some preliminary results that will be used later. In Section \ref{sec:main} we present the proof of our main result, Theorem \ref{main1}. In Section \ref{sec:discussion} we propose a conjecture providing a characterization for the rank of the $(k,d)$-volume rigidity matroid of a complex in terms of the standard rigidity matroid of its $1$-skeleton, and discuss some of its consequences.

\section{Preliminary results}\label{sec:prelim}
\subsection{Adding a vertex}

Let $X$ be a simplicial complex, and let $v\in X_0$. We denote by $X\setminus v$ the simplicial complex on vertex set $X_0\setminus\{v\}$ whose simplices are all the simplices of $X$ that do not contain $v$. We define the \emph{link} of $v$ in $X$ to be the subcomplex of $X$ consisting of all simplices of the form $\sigma\setminus \{v\}$, where $v\in \sigma\in X$.

\begin{lem}[Vertex addition  lemma]\label{lem:addvertex}
Let $1\le k\le d-1$ and let $n\ge d+1$. Let $X$ be a $k$-dimensional simplicial complex on $n$ vertices and let $v\in X_0$ be a vertex 
whose link in $X$ is the complete $(k-1)$-dimensional complex on $X_0\setminus \{v\}$. 
Then, 
$$
{\rm rank}({\cal M}_{k,d}(X)) \ge {\rm rank}({\cal M}_{k,d}(X\setminus v))+d.$$
\end{lem}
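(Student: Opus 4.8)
The plan is to exploit the block structure of ${\cal B}(X,\p)$ obtained by ordering the $d$ columns belonging to $v$ last and the rows indexed by $k$-simplices containing $v$ last, writing
$$
{\cal B}(X,\p)=\begin{pmatrix} A & 0 \\ C & D \end{pmatrix},
$$
where $A$ is the block of rows for simplices avoiding $v$ and columns for vertices other than $v$, the upper-right zero block records that such simplices have vanishing entries in the $v$-columns (by \eqref{Bexplicit}), and $D$ is the $\binom{n-1}{k}\times d$ block whose row indexed by $\sigma\ni v$ equals the $d$-vector $\frac{2{\rm vol}_k(\sigma)}{k!}{\rm vol}_{k-1}(\sigma\setminus\{v\})N_{\sigma,v}$. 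Restricting $A$ to its nonzero columns identifies it with ${\cal B}(X\setminus v,\p')$ for $\p'$ the restriction of $\p$ to $X_0\setminus\{v\}$, which is again generic; hence ${\rm rank}(A)={\rm rank}({\cal M}_{k,d}(X\setminus v))$. The lemma will follow once I establish the block inequality ${\rm rank}({\cal B}(X,\p))\ge {\rm rank}(A)+{\rm rank}(D)$ together with ${\rm rank}(D)=d$ at a generic $\p$.

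The block inequality is elementary: I pick a maximal independent set of rows of $A$ and a maximal independent set of rows of $D$, and consider the corresponding rows of the full matrix. In any vanishing linear combination, projecting onto the $v$-columns kills the $A$-rows and forces the coefficients of the $D$-rows to vanish (by independence of the chosen $D$-rows); the remaining combination of $A$-rows then vanishes as well. Thus these ${\rm rank}(A)+{\rm rank}(D)$ rows are independent.

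It remains to show ${\rm rank}(D)=d$ for generic $\p$. Since ${\rm vol}_k(\sigma)^2$ is a polynomial in the coordinates of $\p$ (a Gram determinant), every entry of ${\cal B}(X,\p)$, and hence every minor of $D$, is a polynomial in $\p$; consequently the generic rank of $D$ equals its maximum rank over all real embeddings, and it suffices to exhibit one embedding at which $D$ has rank $d$. I would take $\p(v)=0$ and $\p(u_i)=e_i$ for $d$ of the vertices $u_1,\dots,u_d\in X_0\setminus\{v\}$ (available since $n-1\ge d$), the remaining vertices placed arbitrarily. Because the link of $v$ is the complete $(k-1)$-complex, every $k$-subset $\tau\subseteq\{u_1,\dots,u_d\}$ gives a genuine $k$-simplex $\sigma=\{v\}\cup\tau$ of $X$ with ${\rm vol}_k(\sigma),{\rm vol}_{k-1}(\tau)\ne 0$, so the corresponding row of $D$ is a nonzero multiple of $N_{\sigma,v}$. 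A direct computation shows that the foot of the altitude from the origin to ${\rm aff}\{e_i:i\in\tau\}$ is $\frac1k\mathbf 1_\tau$, where $\mathbf 1_\tau:=\sum_{i\in\tau}e_i$ is the $0/1$ indicator vector of $\tau$; hence $N_{\sigma,v}$ is proportional to $\mathbf 1_\tau$, and the row space of $D$ contains all $\mathbf 1_\tau$ with $\tau\in\binom{[d]}{k}$.

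Finally I would verify that these indicator vectors span $\RR^d$ when $1\le k\le d-1$: for $i\ne j$, choosing $A\subseteq[d]\setminus\{i,j\}$ with $|A|=k-1$ (possible precisely because $k-1\le d-2$) yields $\mathbf 1_{A\cup\{i\}}-\mathbf 1_{A\cup\{j\}}=e_i-e_j$, so the span contains the zero-sum hyperplane; since each $\mathbf 1_\tau$ has coordinate sum $k\ne 0$, the span is all of $\RR^d$. This gives ${\rm rank}(D)=d$ and completes the proof. The main obstacle is this last step — reducing the analytic assertion ``the altitude normals $N_{\sigma,v}$ span $\RR^d$'' to a purely combinatorial spanning statement via a good choice of embedding — and it is exactly here that the hypotheses $k\le d-1$ (so the indicator vectors span) and $n\ge d+1$ (so enough vertices are available) enter, while the completeness of the link is what guarantees that all the simplices $\{v\}\cup\tau$ are present.
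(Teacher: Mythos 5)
Your proof is correct and follows essentially the same strategy as the paper's: block-triangularize ${\cal B}(X,\p)$ with the $v$-columns separated, identify the top block with ${\cal B}(X\setminus v,\p')$, and show the $v$-column block has rank $d$ by evaluating at a special symmetric embedding (the paper uses a regular $d$-simplex, you use the standard basis with $\p(v)=0$, but both reduce to the altitude feet being barycenters) and then invoking polynomiality of the entries plus genericity. Your explicit indicator-vector computation $\mathbf 1_{A\cup\{i\}}-\mathbf 1_{A\cup\{j\}}=e_i-e_j$ is simply a more detailed version of the paper's assertion that the barycenters affinely span a hyperplane, so the two arguments match step for step.
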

\begin{proof}
Let $\p$ be a generic embedding of $X_0$ in $\RR^d$. Consider the matrices ${\cal B}={\cal B}(X,\p)$ and 
${\cal B}'={\cal B}(X\setminus v, \p|_{X_0\setminus \{v\}})$. 
Our goal is to show that     
$$
{\rm rank}({\cal B})\ge{\rm rank}({\cal B}')+d.
$$ 
Note that $|X_0\setminus \{v\}|\ge d$, and let $v_1,\ldots,v_d\in X_0\setminus \{v\}$ be distinct vertices. 
Let
\[
    \Sigma= \{ \eta\cup\{v\} :\, \eta\subset\{v_1,\ldots,v_d\},\,|\eta|=k\}.
\]
That is, $\Sigma$ is the set of $k$-simplices that are the union of $v$ with a $k$-subset of $\{v_1,\ldots,v_d\}$.
Let $\hat X$ be the 
subcomplex of $X$ defined by $\hat{X}_i=X_i$ for $0\le i\le k-1$, and 
$$\hat X_k:=(X\setminus v)_k\cup \Sigma.$$
Let $\hat{\cal B}= {\cal B}(\hat{X},\p)$. As $\hat{X}_k\subset X_k$ and $\hat{X}_0 = X_0$, we obtain $\hat{\cal B}$ from ${\cal B}$ by removing some of its rows. Hence,
\[
    {\rm rank}({\cal B})\ge{\rm rank}(\hat{{\cal B}}).
\]
Fixing an ordering on the rows of $\hat{\cal B}$ so that the rows corresponding to the $k$-simplices in $\Sigma$ are last, and an ordering on its columns so the $d$ columns corresponding to the vertex $v$ are last, we get that 
$\hat{\cal B}$ is a block lower-triangular matrix, consisting of two diagonal blocks: one of them is ${\cal B}'$, and the other, which we denote by ${\cal B}''$, 
is 
the $\binom{d}{k}\times d$ submatrix of ${\cal B}$ corresponding to the rows of $\Sigma$ and to the $d$ columns associated with $v$. Note that, by our assumption on $k$, we have that $\binom{d}{k}\ge d$.

We claim that ${\rm rank}({\cal B}'')=d$. 
Indeed, in view of \eqref{Bexplicit}, the rows of ${\cal B}''$ correspond to a scaling of the vectors $N_{\sigma,v}(\p)$, for $\sigma\in \Sigma$. 
For $\sigma\in \Sigma$, let $y_{\sigma}(\p)$ be the foot of the altitude of $\p(\sigma)$ with respect to $\p(v)$. Note that the vectors $\{N_{\sigma,v}(\p)\}_{\sigma\in \Sigma}$ linearly span $\RR^d$ if and only if the affine span of $\{y_{\sigma}(\p)\}_{\sigma\in \Sigma}$ in $\RR^d$ has dimension $d-1$.
Now, consider a special embedding $\p'\in(\RR^d)^{|X_0|}$ that maps the vertices $v,v_1,\ldots,v_d$ to the vertices of a regular $d$-simplex in $\RR^d$. Observe that for each $\sigma\in\Sigma$, $y_{\sigma}(\p')$ is the barycenter of $\p'(\sigma\setminus\{v\})$. It is easy to see that the convex hull of $\{y_{\sigma}(\p')\}_{\sigma\in \Sigma}$  has dimension $d-1$, thus the vectors $\{N_{\sigma,v}(\p')\}_{\sigma\in \Sigma}$ linearly span $\RR^d$.
Finally, the entries of ${\cal B}''$ are algebraic expressions in the entries of the embedding $\p$, 
and thus,
as $\p$ is generic, the vectors $\{N_{\sigma,v}(\p)\}_{\sigma\in\Sigma}$ also linearly span $\RR^d$, or equivalently, ${\rm rank}({\cal B}'')=d$. 
Thus, we obtain
\[
    {\rm rank}({\cal B})\ge{\rm rank}(\hat{{\cal B}}) \ge {\rm rank}({\cal B}') +d,
\]
as wanted.
\end{proof}

\subsection{Cayley--Menger formula}
Consider a $k$-dimensional simplex, $\sigma$, with vertices $\{0,1,\ldots,k\}$, embedded in $\RR^d$ (for some $d\ge k$). For $0\le i<j\le k$, let $d_{ij}$ denote the distance between the vertices $i$ and $j$. Recall that by the Cayley--Menger formula (see, for example, \cite[Chapter 1]{fiedlerbook}), we have
\begin{equation}\label{CMformula}
{\rm vol}_k^2(\sigma)= g(d_{01},d_{02},\ldots,d_{k-1,k}):= 
\frac{(-1)^{k+1}}{(k!)^22^k}\det\begin{bmatrix}
    0       & d_{01}^2 & d_{02}^2 & \dots & d_{0k}^2&1 \\
    d_{01}^2       &0 & d_{12}^2 & \dots & d_{1k}^2&1 \\
    d_{02}^2       &d_{12}^2 & 0 & \dots & d_{2k}^2&1 \\
    \vdots       &\vdots & \vdots & \ddots & \vdots&\vdots \\
    d_{0k}^2       &d_{1k}^2 & d_{2k}^2 & \dots & 0&1 \\
  1       &1 & 1 & \dots & 1&0 
    \end{bmatrix}.
    \end{equation}

\begin{lem}\label{lem:nonzero}
Let $f(t)=g(\sqrt{t},d_{02},\ldots,d_{k-1,k})$ denote the squared $k$-volume of a $k$-dimensional simplex on $\{0,1,\ldots,k\}$  whose edge lengths $d_{ij}>0$ are fixed, except for the edge $\{0,1\}$ whose squared length is a parameter $t$. Let $t_0,t_{\pi/2},t_{\pi}$ be the values of $t$ for which the angle between the simplices $\{0\}\cup \{2,\ldots,k\}$ and $\{1\}\cup\{2,\ldots,k\}$ is $0$, $\pi/2$, and $\pi$, respectively. Note that the domain of $f$ is the closed interval $[t_0,t_{\pi}]$. Then, for $t\in(t_0,t_{\pi})$, $f'(t)\ne 0$ unless $t=t_{\pi/2}$.
\end{lem}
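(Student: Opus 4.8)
The plan is to reparametrize the problem by the dihedral angle $\theta\in[0,\pi]$ between the two $(k-1)$-faces $\sigma_0=\{0\}\cup\{2,\dots,k\}$ and $\sigma_1=\{1\}\cup\{2,\dots,k\}$ along their shared $(k-2)$-face $\tau=\{2,\dots,k\}$, and to show that, as functions of $\theta$, both $t$ and $f$ have simple closed forms from which $f'(t)$ can be read off directly. Since all edge lengths other than $\{0,1\}$ are fixed, the face $\tau$ is a fixed nondegenerate $(k-2)$-simplex, the feet $p_0,p_1$ of the perpendiculars dropped from the vertices $0,1$ onto the flat $F=\mathrm{aff}(\tau)$ are determined (their barycentric positions in $F$ depend only on the fixed distances $d_{0j},d_{1j}$, $j\ge 2$), and so are the altitudes $h_0=\mathrm{dist}(0,F)$ and $h_1=\mathrm{dist}(1,F)$. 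By nondegeneracy of $\sigma_0,\sigma_1$ we have $h_0,h_1>0$ and $\mathrm{vol}_{k-2}(\tau)>0$.

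First I would compute $t=d_{01}^2$ as a function of $\theta$. Writing $a=0-p_0$ and $b=1-p_1$, both vectors lie in the $2$-dimensional orthogonal complement of the direction space of $F$, and $\theta$ is exactly the angle between them; moreover $c=p_0-p_1$ lies in the direction space of $F$ and hence is orthogonal to $a-b$. Decomposing $0-1=(a-b)+c$ and using $|a|=h_0$, $|b|=h_1$ gives
\[
t=|a-b|^2+|c|^2=h_0^2+h_1^2+|c|^2-2h_0h_1\cos\theta=:A-B\cos\theta,
\]
where $|c|=|p_0-p_1|$ is a fixed length and $B=2h_0h_1>0$. In particular $t$ is strictly increasing in $\theta$ on $[0,\pi]$, which identifies the endpoints $t_0,t_\pi$ and the value $t_{\pi/2}=A$, and confirms that the domain of $f$ is $[t_0,t_\pi]$.

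Next I would compute $f=\mathrm{vol}_k(\sigma)^2$ as a function of $\theta$. Placing $F$ in the first $k-2$ coordinates and $a,b$ in the last two, so that $a=(h_0,0)$ and $b=(h_1\cos\theta,h_1\sin\theta)$, I expand the $k\times k$ determinant whose columns are the edge vectors from vertex $0$ along the last ($y$-)row: only the column for vertex $1$ has a nonzero $y$-entry, equal to $h_1\sin\theta$, and the resulting $(k-1)\times(k-1)$ minor is exactly $(k-1)!\,\mathrm{vol}_{k-1}(\sigma_0)$. This yields $\mathrm{vol}_k(\sigma)=\frac{h_1\sin\theta}{k}\,\mathrm{vol}_{k-1}(\sigma_0)$, and since $\mathrm{vol}_{k-1}(\sigma_0)=\frac{h_0}{k-1}\mathrm{vol}_{k-2}(\tau)$ is fixed, we obtain
\[
f=C\sin^2\theta,\qquad C=\Big(\tfrac{h_0h_1\,\mathrm{vol}_{k-2}(\tau)}{k(k-1)}\Big)^2>0.
\]

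Finally I would combine the two expressions by the chain rule. On $(0,\pi)$ we have $\sin\theta\neq 0$, so $dt/d\theta=B\sin\theta\neq 0$ and
\[
f'(t)=\frac{df/d\theta}{dt/d\theta}=\frac{2C\sin\theta\cos\theta}{B\sin\theta}=\frac{2C}{B}\cos\theta.
\]
Since $C,B>0$, this vanishes precisely when $\cos\theta=0$, i.e.\ at $\theta=\pi/2$, corresponding to $t=t_{\pi/2}$; hence $f'(t)\ne 0$ for all $t\in(t_0,t_\pi)$ with $t\ne t_{\pi/2}$, as claimed. The main obstacle is the volume identity $f=C\sin^2\theta$: one must choose coordinates adapted to the shared face $\tau$ and carry out the cofactor expansion so as to isolate the single $\sin\theta$ factor, and one must justify that the perpendicular feet $p_0,p_1$—and hence the constants $A$, $B$, $C$—are independent of $t$. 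Everything else is bookkeeping with the chain rule.
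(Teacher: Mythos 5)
Your proof is correct, but it takes a genuinely different route from the paper's. The paper stays entirely inside the Cayley--Menger formula: expanding the determinant shows $f(t)=At^2+Bt+C$ is a quadratic in $t$, the leading coefficient is computed explicitly as $A=-\tfrac{1}{4k^2(k-1)^2}{\rm vol}_{k-2}^2(\tau)\ne 0$ (with the convention ${\rm vol}_0=1$ for $k=2$), so $f'$ has a unique zero, and that zero is then identified with $t_{\pi/2}$ by \emph{citing} the geometric fact that the volume is maximized when the two $(k-1)$-faces are orthogonal. You instead bypass Cayley--Menger altogether: you parametrize by the dihedral angle $\theta$ along the common face $\tau=\{2,\dots,k\}$, derive $t=A-B\cos\theta$ (a law-of-cosines computation using the perpendicular feet $p_0,p_1$) and $f=C\sin^2\theta$ (base-times-height iterated twice), and conclude by the chain rule that $f'(t)=\tfrac{2C}{B}\cos\theta$. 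The two proofs meet in the middle: your formulas combine to $f=C\bigl(1-(A-t)^2/B^2\bigr)$, which is exactly the paper's downward parabola with vertex at $t_{\pi/2}$, and your constants match the paper's (your $-C/B^2$ is the paper's $A$; you in effect also prove the unused identities stated in the paper's Remark). What the paper's route buys is brevity given that the Cayley--Menger machinery is already set up and reused elsewhere in the paper, and it avoids any geometric bookkeeping; what your route buys is self-containedness--you \emph{prove} rather than assume the ``maximum at orthogonality'' fact, which is the one step the paper leaves to the reader--plus explicit constants and monotonicity of $t$ in $\theta$. Two small points to tighten: the orthogonal complement of the direction space of $F$ is $2$-dimensional only inside the affine hull of the simplex (your coordinate choice handles this, but say so), and you should state the nondegeneracy hypotheses ($h_0,h_1>0$, ${\rm vol}_{k-2}(\tau)>0$) explicitly, including the $k=2$ case where $\tau$ is a single point; the paper needs the same implicit assumptions.
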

\begin{proof}
Using the Cayley--Menger formula \eqref{CMformula} and setting $t=d_{01}^2$, we get
$$
f(t)=
At^2+Bt+C,
$$
for some constants $A,B,C$. 
It is easy to verify that
$$
A=
\begin{cases}
\tfrac{-1}{16}& \quad k=2,\\
\tfrac{-1}{4k^2(k-1)^2}{\rm vol}_{k-2}^2(\sigma_{01})&\quad k>2,\end{cases}
$$
where $\sigma_{01}$ stands for the $(k-2)$-dimensional simplex spanned by $\{2,\ldots,k\}$. In particular, $A\neq 0$. 

Thus, 
$f'(t)=2At+B$, and we have $f'(t)=0$ for $t=\frac{-B}{2A}$. In particular, $f(t)$ has at most one local extremum. On the other hand, we know that $f$ has local maximum when the $(k-1)$-dimensional simplices spanned by $\{0\}\cup \{2,\ldots,k\}$ and $\{1\}\cup\{2,\ldots,k\}$ are orthogonal to one another, that is, when $t=t_{\pi/2}$. The lemma then follows.
\end{proof}

\begin{remark}
In the proof of Lemma~\ref{lem:nonzero}, it follows that
$B=-2A(d_{01}^*)^2$
and $C=A(d_{01}^*)^4+(v^*)^2$
where $d_{01}^*$ is the edge-length when the volume is maximized (that is, when the two faces $\{0\}\cup \{2,\ldots,k\}$ and $\{1\}\cup\{2,\ldots,k\}$ are orthogonal), and $v^*$ is this maximal volume.
However, we do not use these facts.
\end{remark}

\begin{defn}Let $X$ be a $k$-dimensional simplicial complex. Let $h_X:\RR^{|X_1|}\to\RR^{|X_k|}$ map squared edge lengths to squared $k$-volumes, using the Cayley--Menger formula \eqref{CMformula}. 
Let $${\cal C}(X,\dd):=J_{h_{X}}(\dd)$$ denote the Jacobian matrix of $h_X$, evaluated for a given vector $\dd$ of squared edge lengths, arising from some embedding $\p$ of $X_0$ in $\RR^d$. 
\end{defn}

Note that ${\cal C}(X,\dd)$ is an $|X_k|\times |X_1|$ matrix, whose $(\sigma,e)$-entry, for $\sigma\in X_k$ and $e\in X_1$, is the partial derivative of the square of the $k$-volume of the simplex $\sigma$  with respect to the variable $\dd_e$, evaluated at the point $\dd$. 
Observe that  
$$v_X=h_{X}\circ f_{X_1},$$
where $f_{X_1}:(\RR^{d})^{|X_0|}\to \RR^{|X_1|}$ is the squared edge length map. That is, $f_{X_1}$ is the map defined by
\[
    f_{X_1}(\p)_e= \|\p(u)-\p(v)\|^2
\]
for all $e=\{u,v\}\in X_1$. By the chain rule, we have
\begin{equation}\label{chainrule}
{\cal B}(X,\p)={\cal C}(X,\dd) \cdot R(G,\p),\end{equation}
where $R(G,\p)=J_{f_{X_1}}(\p)$ is the standard rigidity matrix of the graph $G=(X_0,X_1)$
and  $\dd:=f_{X_1}(\p)$.

\section{Proof of main result}\label{sec:main}

Our main result, Theorem \ref{main1}, follows immediately from the next two statements.

\begin{prop}\label{main_part1}
Let $d\ge 2$, $k=d-1$, and $n\ge d+2$. Then, 
$$
{\rm rank}({\cal M}_{k,d}(\Delta_{n,k}))=dn-\binom{d+1}{2}.
$$
\end{prop}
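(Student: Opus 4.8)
The plan is to establish matching upper and lower bounds on ${\rm rank}({\cal M}_{d-1,d}(\Delta_{n,d-1}))$, treating the lower bound by induction on $n$ with base case $n=d+2$. For the upper bound I would use the chain rule factorization \eqref{chainrule}: since the $1$-skeleton of $\Delta_{n,d-1}$ is the complete graph $K_n$, we have ${\cal B}(\Delta_{n,d-1},\p)={\cal C}(\Delta_{n,d-1},\dd)\cdot R(K_n,\p)$, so ${\rm rank}({\cal B})\le {\rm rank}(R(K_n,\p))=dn-\binom{d+1}{2}$ for generic $\p$ and $n\ge d+1$, the last equality being the classical $k=1$ case (see \cite{chapter:Jordan}). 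For the lower bound I would induct: in $\Delta_{n,d-1}$ every vertex $v$ has link equal to the complete $(d-2)$-dimensional complex on the remaining vertices, and $\Delta_{n,d-1}\setminus v=\Delta_{n-1,d-1}$, so Lemma \ref{lem:addvertex} gives ${\rm rank}({\cal M}_{d-1,d}(\Delta_{n,d-1}))\ge {\rm rank}({\cal M}_{d-1,d}(\Delta_{n-1,d-1}))+d$; feeding in the inductive hypothesis for $n-1\ge d+2$ produces $dn-\binom{d+1}{2}$, matching the upper bound. It remains to treat the base case.

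When $n=d+2$ the $(d-1)$-faces of $\Delta_{d+2,d-1}$ are the $d$-subsets of the vertex set, numbering $\binom{d+2}{d}=\binom{d+2}{2}$, which equals the number of edges. Hence ${\cal C}:={\cal C}(\Delta_{d+2,d-1},\dd)$ is a \emph{square} $\binom{d+2}{2}\times\binom{d+2}{2}$ matrix, and from ${\cal B}={\cal C}\cdot R(K_{d+2},\p)$ with ${\rm rank}(R(K_{d+2},\p))=d(d+2)-\binom{d+1}{2}$ it suffices to show that ${\cal C}$ is invertible: then ${\rm rank}({\cal B})={\rm rank}(R)$ and the base case follows. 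Consistently, $K_{d+2}$ carries a one-dimensional space of self-stresses, and invertibility of ${\cal C}$ transports it to the unique dependency among the squared volumes.

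To prove ${\cal C}$ invertible I would exploit its support. By \eqref{CMformula} the entry ${\cal C}_{\sigma,e}=\partial\,{\rm vol}_{d-1}^2(\sigma)/\partial\dd_e$ vanishes unless $e\subset\sigma$, so the zero/nonzero pattern of ${\cal C}$ is exactly that of the inclusion matrix $W$ of $2$-subsets versus $d$-subsets of $[d+2]$; this square $0/1$ matrix is invertible by the classical rank formula for inclusion matrices \cite{gottlieb1966certain,graver1973module,wilson1973necessary}, since $2\le d\le (d+2)-2$. Evaluating the polynomial entries of ${\cal C}$ at the equilateral squared-length vector $\dd_0$ (all entries equal), flag-transitivity of the symmetric group forces ${\cal C}(\dd_0)=c\cdot W$ for a single scalar $c$, which is nonzero by Lemma \ref{lem:nonzero} because the facets of a regular $(d-1)$-simplex meet at the dihedral angle $\arccos(1/(d-1))\ne\pi/2$. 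Thus $\det{\cal C}\not\equiv 0$ as a polynomial in the $\dd_e$.

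The main obstacle is the final transfer to $\RR^d$: the equilateral point $\dd_0$ is realizable only in $\RR^{d+1}$, so the previous step shows only that $\det{\cal C}$ is not the zero polynomial in the \emph{unconstrained} squared lengths, whereas we need it nonzero on the codimension-one realizability hypersurface $\{\Phi=0\}$ cut out by the vanishing $(d+1)$-dimensional Cayley--Menger determinant of all $d+2$ points. Since $\det{\cal C}$ is a polynomial in $\p$, it is enough to find one $\RR^d$-realizable configuration with $\det{\cal C}\ne0$, and I would obtain it from a symmetric but genuinely $d$-dimensional model---e.g.\ the bipyramid over a regular $(d-1)$-simplex, whose symmetry group $S_d\times\ZZ_2$ block-diagonalizes ${\cal C}$---checking via Lemma \ref{lem:nonzero} and the invertibility of $W$ that no block degenerates; alternatively one shows directly that $\Phi\nmid\det{\cal C}$, so $\det{\cal C}$ cannot vanish identically on $\{\Phi=0\}$. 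Establishing nonvanishing at such a realizable configuration is the crux, after which genericity of $\p$ together with the bounds above completes the proof.
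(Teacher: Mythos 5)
Your framework coincides with the paper's: the same upper bound via \eqref{chainrule}, the same induction on $n$ via Lemma~\ref{lem:addvertex}, and the same reduction of the base case $n=d+2$ to showing that the square matrix ${\cal C}={\cal C}(\Delta_{d+2,d-1},\dd)$ is invertible. You also correctly diagnose the obstacle in your first attempt: the all-equal vector $\dd_0$ is realized only by a regular $(d+1)$-simplex in $\RR^{d+1}$, so nonvanishing of $\det{\cal C}$ at $\dd_0$ shows only that $\det{\cal C}$ is a nonzero polynomial in \emph{unconstrained} squared lengths, whereas the proposition needs $\det{\cal C}\neq 0$ at some $\dd$ in the image of $f_{X_1}$ on $(\RR^d)^{d+2}$. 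But the proposal stops exactly there: neither of your proposed repairs (block-diagonalizing ${\cal C}$ for a bipyramid over a regular $(d-1)$-simplex, or showing that the realizability polynomial does not divide $\det{\cal C}$) is carried out, and you say yourself that this step is the crux. This is a genuine gap, not a routine verification. In particular, the bipyramid's symmetry group $S_d\times\ZZ_2$ has many orbits on incident pairs $(T,e)$ (three types of edges --- base--base, base--apex, apex--apex --- crossed with three types of $d$-sets $T$, distinguished further by relative position), so ${\cal C}$ does \emph{not} collapse to a scalar multiple of the inclusion matrix; the representation-theoretic block analysis you gesture at would require substantial work, including ruling out accidental right dihedral angles, which Lemma~\ref{lem:nonzero} alone does not do for you.

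The paper closes this gap with a different choice of realizable configuration: $\p(2),\ldots,\p(d+2)$ are the vertices of a regular $d$-simplex in $\RR^d$ and $\p(1)$ is its \emph{centroid}. The stabilizer $S_{d+1}$ of vertex $1$ acts with exactly three orbits on incident pairs, namely $1\notin T$; $1\in T\cap e$; $1\in T\setminus e$, so ${\cal C}$ takes only three distinct nonzero values $\alpha,\beta,\gamma$, each nonzero by Lemma~\ref{lem:nonzero}. Scaling the rows with $1\notin T$ by $1/\alpha$, the rows with $1\in T$ by $1/\gamma$, and the columns with $1\in e$ by $\gamma/\beta$ turns ${\cal C}$ into exactly the inclusion matrix $A_{k+1,2}^{d+2}$, invertible by Gottlieb; the scalings are consistent because a column indexed by $e\ni 1$ meets only rows with $1\in T$. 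So the missing ingredient in your proof is precisely the paper's key idea: a configuration that is genuinely $d$-dimensional yet symmetric enough that ${\cal C}$ reduces, after diagonal scalings, to the inclusion matrix. Without an argument of this kind (or a completed version of one of your two sketched alternatives), your base case, and hence the whole proof, is incomplete.
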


\begin{prop}\label{main_part2}
Let $d\ge 3$, $1\le k\le d-2$, and $n\ge d+1$. Then, 
$$
{\rm rank}({\cal M}_{k,d}(\Delta_{n,k}))=dn-\binom{d+1}{2}.
$$
\end{prop}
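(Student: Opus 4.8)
The plan is to combine the vertex-addition lemma with a single base case, $n=d+1$, in which the $(k,d)$-volume rigidity matrix factors through an inclusion matrix of Gottlieb type.

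First I would record the upper bound ${\rm rank}({\cal M}_{k,d}(\Delta_{n,k}))\le dn-\binom{d+1}{2}$ for all $n\ge d+1$. This is immediate from the chain rule \eqref{chainrule}: the $1$-skeleton of $\Delta_{n,k}$ is the complete graph $K_n$, so ${\cal B}(\Delta_{n,k},\p)={\cal C}(\Delta_{n,k},\dd)\cdot R(K_n,\p)$ and hence ${\rm rank}({\cal B})\le{\rm rank}(R(K_n,\p))$. Since $n\ge d+1$, the classical bound on the $d$-dimensional rigidity matrix gives ${\rm rank}(R(K_n,\p))\le dn-\binom{d+1}{2}$ for generic $\p$, which yields the claim.

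Next I would reduce the matching lower bound to the case $n=d+1$. In $\Delta_{n,k}$ every vertex $v$ has complete $(k-1)$-dimensional link and $\Delta_{n,k}\setminus v=\Delta_{n-1,k}$, so Lemma \ref{lem:addvertex} (applicable since $1\le k\le d-2\le d-1$ and $n\ge d+1$) gives ${\rm rank}({\cal M}_{k,d}(\Delta_{n,k}))\ge{\rm rank}({\cal M}_{k,d}(\Delta_{n-1,k}))+d$. Iterating down to $n=d+1$, it suffices to prove ${\rm rank}({\cal M}_{k,d}(\Delta_{d+1,k}))=\binom{d+1}{2}$: writing $n=d+1+m$, the lemma then yields the lower bound $\binom{d+1}{2}+md=dn-\binom{d+1}{2}$, matching the upper bound.

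The heart of the argument is the base case $X=\Delta_{d+1,k}$. Here the $1$-skeleton is $K_{d+1}$, which is isostatic in $\RR^d$, so $R(K_{d+1},\p)$ is a $\binom{d+1}{2}\times d(d+1)$ matrix of full row rank $\binom{d+1}{2}$ at generic $\p$; right-multiplication by a full-row-rank matrix preserves rank, so by \eqref{chainrule} we get ${\rm rank}({\cal B}(X,\p))={\rm rank}({\cal C}(X,\dd))$. It then remains to show the $\binom{d+1}{k+1}\times\binom{d+1}{2}$ matrix ${\cal C}(X,\dd)$ has full column rank $\binom{d+1}{2}$ at generic $\dd$ (note $\binom{d+1}{k+1}\ge\binom{d+1}{2}$ exactly when $1\le k\le d-2$, so this is numerically possible). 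By \eqref{CMformula} the $(\sigma,e)$-entry of ${\cal C}$ vanishes unless $e\subset\sigma$, so ${\cal C}$ has the support pattern of the inclusion matrix $W$ of $2$-subsets versus $(k+1)$-subsets of $[d+1]$. To pass from support to rank, I would specialize $\dd$ to the regular $d$-simplex: by its full symmetry all nonzero entries of ${\cal C}$ equal a single constant $c$, and $c\ne0$ by Lemma \ref{lem:nonzero} (the relevant dihedral angle of a regular simplex is $\arccos(1/k)\ne\pi/2$; the case $k=1$ is immediate, as ${\cal C}$ is then the identity). Hence at this point ${\cal C}=c\,W$, and by the Gottlieb--Wilson theorem ${\rm rank}(W)=\binom{d+1}{2}$ precisely because $2+(k+1)\le d+1$, i.e.\ $k\le d-2$. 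Since the entries of ${\cal C}$ are polynomial in $\dd$, the generic rank is at least the rank at this special point, so $\ge\binom{d+1}{2}$, and it is at most $\binom{d+1}{2}$; thus ${\rm rank}({\cal C})=\binom{d+1}{2}$ generically, completing the base case and hence the proposition.

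I expect the last step to be the main obstacle: a matrix with the same zero pattern as a full-rank $0/1$ matrix need not itself have full rank, so one cannot invoke Gottlieb's rank theorem directly on the generic Jacobian. The device that resolves this is the specialization to the regular simplex, where symmetry forces the exact proportionality ${\cal C}=c\,W$ with $c\ne0$ (guaranteed by Lemma \ref{lem:nonzero}), after which the combinatorial rank computation together with lower semicontinuity of rank finishes the job. The two places where the hypothesis $1\le k\le d-2$ is genuinely used are the numerology $\binom{d+1}{k+1}\ge\binom{d+1}{2}$ and the Gottlieb range $2+(k+1)\le d+1$, and I would verify both explicitly.
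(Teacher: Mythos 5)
Your proposal is correct and follows essentially the same route as the paper: reduction to $n=d+1$ via the vertex-addition lemma, the chain-rule factorization ${\cal B}={\cal C}\cdot R(K_{d+1},\p)$ with $R(K_{d+1},\p)$ of full row rank, the regular-simplex specialization turning ${\cal C}$ into a nonzero multiple of the inclusion matrix $A_{k+1,2}^{d+1}$ (with Lemma \ref{lem:nonzero} guaranteeing the constant is nonzero), and Gottlieb's theorem plus maximality of the generic rank to conclude. Your added details (the dihedral angle $\arccos(1/k)\ne\pi/2$ and the numerology $\binom{d+1}{k+1}\ge\binom{d+1}{2}$) are accurate refinements of steps the paper leaves implicit.
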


\begin{proof}[Proof of Proposition~\ref{main_part1}]
First note that, for any $n\ge d+1$, we have 
\begin{equation}\label{lowerbound}
{\rm rank}({\cal M}_{k,d}(\Delta_{n,k}))\le dn-\binom{d+1}{2}.
\end{equation}
Indeed, by \eqref{chainrule}, we have, for any $\p$,
\[
{\rm rank}({\cal B}(\Delta_{n,k},\p))\le {\rm rank}(R(\Delta_{n,1},\p))\le dn-\binom{d+1}{2},
\]
where the last inequality is a standard result on the rigidity of graphs (see e.g. \cite[Lemma 1.2.1]{chapter:Jordan}). Hence, the inequality \eqref{lowerbound} follows. 

Note also that it suffices to prove the theorem for $n=d+2$. That is, it suffices to prove that
\begin{equation}\label{casen=d+2}
    {\rm rank}({\cal M}_{k,d}(\Delta_{d+2,k}))=d(d+2)-\binom{d+1}{2}
.\end{equation}
Indeed, given \eqref{casen=d+2}, we can then repeatedly apply Lemma~\ref{lem:addvertex} to add the $n-(d+2)$ remaining vertices of $\Delta_{n,k}$ one by one. At each step, when we add a vertex $v$, the rank of the resulting matrix is increased by at least $d$,
hence  
$${\rm rank}({\cal M}_{k,d}(\Delta_{n,k})) \ge
d(d+2)-\binom{d+1}{2}+d(n-(d+2))=dn-\binom{d+1}{2}.$$
Together with the reverse inequality \eqref{lowerbound}, this proves the theorem.

Thus, we only need to prove \eqref{casen=d+2}.
Recall that for a matrix whose entries are algebraic expressions in the entries of $\p$, its generic rank is also its maximal one. So, to prove \eqref{casen=d+2}, it suffices to find an embedding $\p$ for which
\begin{equation}\label{d+2points}
{\rm rank}({\cal B}(\Delta_{d+2,k},\p))
=d(d+2)- \binom{d+1}{2}.
\end{equation}

Write $X=\Delta_{d+2,k}$. For convenience, assume $X_0=[d+2]$. Let $\p$ be an embedding of $X_0$ in $\RR^{d}$ for which $\p(2),\ldots,\p(d+2)$ are the vertices of a regular $d$-simplex, and $\p(1)$ is the centroid of this simplex.  
We claim that \eqref{d+2points} holds for this choice of $\p$.

Using \eqref{chainrule}, we may write
$$
{\cal B}(\Delta_{d+2,k},\p)
={\cal C}(\Delta_{d+2,k},\dd) \cdot R(K_{d+2},\p),
$$
where $K_{d+2}$ stands for the complete graph on $d+2$ vertices, and $\dd$ is the vector of squared edge lengths induced by $\p$.
It is well-known and easy to check that
$$
{\rm rank}(R(K_{d+2},\p))=d(d+2)-\binom{d+1}{2}.
$$
Note also that, as $k=d-1$ and $n=d+2$, we have 
$$
|X_1|=|X_k|=\binom{d+2}{2},$$
and so ${\cal C}(\Delta_{d+2,k},\dd)$ is a $\binom{d+2}{2}\times \binom{d+2}{2}$ square matrix.
Thus, to prove \eqref{d+2points}, it suffices to prove that 
\begin{equation}
\text{${\cal C}={\cal C}(\Delta_{d+2,k},\dd)$ is invertible.}
\end{equation}
Index the rows of ${\cal C}$ by the elements of $X_k$ and its columns by the elements of $X_1$. 
Observe that, for $e\in X_1$ and $T\in X_{k}$, the $(T,e)$-entry of ${\cal C}$ is $0$ if $e\not\subset T$. Moreover, by symmetry, we have
$$
{\cal C}_{T,e}=
\begin{cases}
0& \quad e\not\subset T,\\
\alpha&\quad e\subset T,\;\;1\not\in T,\\
\beta&\quad e\subset T,\;\;1\in T\cap e,\\
\gamma&\quad e\subset T,\;\;1\in T\setminus e,
\end{cases}
$$
for some three real numbers $\alpha, \beta, \gamma$.
By Lemma~\ref{lem:nonzero}, we have that $\alpha$, $\beta$, and $\gamma$ are non-zero.

We apply the following row- and column- scaling: for every $T\in X_k$, we multiply the $T$-row by $\frac 1\alpha$ if $1\not\in T$, and by $\frac{1}{\gamma}$ otherwise. Then, for every $e\in X_1$ such that $1\in e$, we multiply the $e$-column by $\frac{\gamma}{\beta}$.  
The resulting matrix, which we denote by $A_{k+1,2}^{d+2}$, satisfies
\[
    \left(A_{k+1,2}^{d+2}\right)_{T,e}=\begin{cases}
    0 & e\not\subset T,\\
    1 & e\subset T,
    \end{cases}
\]
for all $e\in X_1$ and $T\in X_k$. That is, $A_{k+1,2}^{d+2}$ is the incidence matrix between $(k+1)$-subsets and $2$-subsets of $X_0=[d+2]$. By Gottlieb~\cite[Corollary 2]{gottlieb1966certain} (or alternatively, by \cite{graver1973module,wilson1973necessary}), $A_{k+1,2}^{d+2}$ is invertible. Since $A_{k+1,2}^{d+2}$ was obtained from ${\cal C}$ by row and column operations,  ${\cal C}$ is invertible as well. This completes the proof of the proposition.

\end{proof}

\begin{proof}[Proof of Proposition~\ref{main_part2}]
By an argument similar to the one at the beginning of the proof of Proposition~\ref{main_part1}, it suffices to prove the proposition for $n=d+1$. Indeed, for $n>d+1$, we can then add the remaining $n-(d+1)$ vertices one by one, using Lemma~\ref{lem:addvertex}.

Write $X=\Delta_{d+1,k}$, and assume for convenience $X_0=[d+1]$. Let $\p$ be an embedding of $X_0$ in $\RR^{d}$ for which $\p(1),\ldots,\p(d+1)$ are the vertices of a regular $d$-simplex.
We claim that, for $1\le k\le d-2$, one has 
\begin{equation}\label{rankcompeasy}
{\rm rank}({\cal B}(\Delta_{d+1,k},\p))=d(d+1)-\binom{d+1}{2}.
\end{equation}
Using \eqref{chainrule}, we may write
\begin{equation}\label{decompBd+1}
{\cal B}(\Delta_{d+1,k},\p)
={\cal C}(\Delta_{d+1,k},\dd) \cdot R(K_{d+1},\p),
\end{equation}
where $K_{d+1}$ stands for  the complete graph on $d+1$ vertices, and $\dd$ is the vector of squared edge lengths induced by $\p$. Namely, $\dd$ is the all-ones vector. Note that 
$$
{\rm rank}(R(K_{d+1},\p))=d(d+1)-\binom{d+1}{2}=\binom{d+1}{2}.
$$
Since the number of rows of $R(K_{d+1},\p)$ is exactly $\binom{d+1}{2}$,  $R(K_{d+1},\p)$ has a full rank, and its image is all of $\RR^{\binom{d+1}{2}}$.
In view of \eqref{decompBd+1}, this implies that 
$$
{\rm rank}({\cal B}(\Delta_{d+1,k},\p))
={\rm rank}({\cal C}(\Delta_{d+1,k},\dd)).
$$
So, in order to prove \eqref{rankcompeasy}, we  need to show that 
\begin{equation}\label{rankC}
{\rm rank}({\cal C}(\Delta_{d+1,k},\dd))=\binom{d+1}{2}.
\end{equation}
Write ${\cal C}={\cal C}(\Delta_{d+1,k},\dd)$. We  index the rows of ${\cal C}$ 
by the elements of $X_k$ and the columns of ${\cal C}$ by the elements of $X_1$. 
Observe that for $e\in X_1$ and $T\in X_{k}$, the $(T,e)$-entry of ${\cal C}$ is $0$ if $e\not\subset T$. Moreover, by symmetry, we have
$$
{\cal C}_{T,e}=
\begin{cases}
0& \quad e\not\subset T,\\
\alpha&\quad e\subset T,
\end{cases}
$$
for some real number $\alpha$.
By Lemma~\ref{lem:nonzero}, we have  $\alpha\neq 0$.

Thus ${\cal C}=\alpha A_{k+1,2}^{d+1}$, where $A_{k+1,2}^{d+1}$ is the incidence matrix between $(k+1)$-subsets and $2$-subsets of $X_0=[d+1]$. By ~\cite[Corollary 2]{gottlieb1966certain}, the matrix $A_{k+1,2}^{d+1}$ has maximal rank. Since $1\le k\le d-2$, we get
$$
{\rm rank}({\cal C})
=
{\rm rank}(A_{k+1,2}^{d+1})
=\binom{d+1}{2},$$ as needed. This proves \eqref{rankC}, and hence \eqref{rankcompeasy}, and thus 
completes the proof of the proposition.
\end{proof}

Finally, let us note that the case $k=d-1$ and $n=d+1$, excluded from Theorem \ref{main1}, follows easily by similar arguments, as detailed next.

\begin{prop}\label{prop:k=d-1,n=d+1}
    Let $d\ge 2$. Then,
    \[
        {\rm rank}({\cal M}_{d-1,d}(\Delta_{d+1,d-1}))=d+1.
    \]
\end{prop}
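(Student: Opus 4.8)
The plan is to reuse the same decomposition \eqref{chainrule} as in the proof of Proposition~\ref{main_part2}, but now with the crucial difference that in the case $k=d-1$, $n=d+1$ the Cayley--Menger Jacobian ${\cal C}$ is no longer of full column rank, so the bound comes entirely from a rank computation on the inclusion matrix $A_{k+1,2}^{d+1}$. Concretely, write $X=\Delta_{d+1,d-1}$ with $X_0=[d+1]$, and take $\p$ to be the regular $d$-simplex embedding, so that $\dd$ is the all-ones vector. As before, by symmetry and Lemma~\ref{lem:nonzero} we have ${\cal C}(X,\dd)=\alpha\, A_{d,2}^{d+1}$ with $\alpha\neq 0$, where $A_{d,2}^{d+1}$ is the incidence matrix between $d$-subsets and $2$-subsets of $[d+1]$. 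Hence ${\rm rank}({\cal C}(X,\dd))={\rm rank}(A_{d,2}^{d+1})$.

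The first key step is to compute ${\rm rank}(A_{d,2}^{d+1})$. Here the parameter $k+1=d$ is no longer in the ``small'' range $k+1\le \lfloor (d+1)/2\rfloor$, so Gottlieb's theorem gives the rank as the \emph{smaller} of the two dimensions, namely $\min\!\left(\binom{d+1}{2},\binom{d+1}{d}\right)=\min\!\left(\binom{d+1}{2},d+1\right)=d+1$ for $d\ge 2$. Thus ${\rm rank}({\cal C}(X,\dd))=d+1$. Since $R(K_{d+1},\p)$ has full row rank $\binom{d+1}{2}$ and surjects onto $\RR^{\binom{d+1}{2}}$ exactly as in Proposition~\ref{main_part2}, the decomposition \eqref{chainrule} yields
\[
{\rm rank}({\cal B}(X,\p))={\rm rank}({\cal C}(X,\dd))=d+1,
\]
and since the generic rank equals the maximal rank over embeddings, ${\rm rank}({\cal M}_{d-1,d}(\Delta_{d+1,d-1}))\ge d+1$; the reverse inequality is immediate because there are only $\binom{d+1}{d}=d+1$ faces in $X_k$, so the matroid has at most $d+1$ elements.

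I expect the only genuine subtlety to be the invocation of Gottlieb's theorem in the complementary regime: one must check that the cited result indeed computes the exact rank of $A_{s,t}^{m}$ as $\min\!\left(\binom{m}{s},\binom{m}{t}\right)$ (full rank in the thinner dimension) for all valid $s,t$, and not merely in the range where the inclusion matrix is ``tall.'' Equivalently, since $A_{d,2}^{d+1}$ and its transpose $A_{2,d}^{d+1}$ have the same rank, one may instead apply Gottlieb's result to $A_{2,d}^{d+1}$, which is in the favorable regime ($2\le d$), to conclude it has full rank $\binom{d+1}{2}$ if $2+d\le d+1$—which fails—so the cleaner route is simply to note $A_{2,d}^{d+1}=\left(A_{d,2}^{d+1}\right)^{\mathsf T}$ has rank $\min\!\left(\binom{d+1}{2},d+1\right)=d+1$. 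Everything else is bookkeeping identical to Proposition~\ref{main_part2}, so no further obstacle arises.
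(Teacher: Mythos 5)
Your proof is correct and takes essentially the same route as the paper's: the regular-simplex embedding, the factorization \eqref{chainrule} with the surjectivity of $R(K_{d+1},\p)$ to reduce everything to ${\rm rank}(A_{d,2}^{d+1})$, Gottlieb's theorem to get rank $d+1$, and the trivial upper bound from $|X_{d-1}|=\binom{d+1}{d}=d+1$. The subtlety you flag about Gottlieb's range of validity is legitimate, but the clean resolution is complementation rather than your (somewhat circular) appeal to the transpose: replacing each $d$-set $T$ and each $2$-set $e$ by their complements in $[d+1]$ identifies $A_{d,2}^{d+1}$ with the inclusion matrix of $1$-subsets in $(d-1)$-subsets of $[d+1]$, which lies in the favorable regime ($1+(d-1)\le d+1$) and hence has full rank $d+1$, in agreement with the general statement of \cite[Corollary 2]{gottlieb1966certain} that the paper invokes.
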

\begin{proof}
Let $\p$ map $(\Delta_{d+1,d-1})_{0}$ to the vertices of a regular $d$-simplex in $\mathbb{R}^d$. Then, following the same arguments as in Proposition \ref{main_part2}, we obtain
\[
    {\rm rank}(\mathcal{B}(\Delta_{d+1,d-1},\p))= {\rm rank}(A_{d,2}^{d+1}),
\]
where $A_{d,2}^{d+1}$ is the incidence matrix between $d$-subsets and $2$-subsets of the set $[d+1]$. By~\cite[Corollary 2]{gottlieb1966certain}, this matrix has maximal rank, namely ${\rm rank}(A_{d,2}^{d+1})=d+1$. Therefore, 
\[
{\rm rank}({\cal M}_{d-1,d}(\Delta_{d+1,d-1}))\ge {\rm rank}(\mathcal{B}(\Delta_{d+1,d-1},\p))=d+1.
\]
On the other hand, since $\mathcal{M}_{d-1,d}$ has exactly $\binom{d+1}{d}=d+1$ elements, we must have
\[
{\rm rank}({\cal M}_{d-1,d}(\Delta_{d+1,d-1}))=d+1,\]
as wanted.
\end{proof}

\section{Discussion}\label{sec:discussion}
For a complex $X$, we call the graph $G=(X_0,X_1)$ the \emph{$1$-skeleton} of $X$. Combining (\ref{chainrule}) with Theorem~\ref{main1}, we obtain that if a simplicial complex on $n\ge d+2$ vertices is $k$-volume rigid in $\RR^d$, for some $d>k\ge 1$, then its $1$-skeleton must be $d$-rigid as a graph. The converse does not hold (see Example \ref{ex1} below). However, we propose the following conjecture, which gives a characterization for the rank of the $(k,d)$-volume rigidity matroid of a complex in terms of the standard $d$-rigidity matroid of its $1$-skeleton.

For two disjoint families of sets ${\cal A},{\cal B}$, let $H_{{\cal A},{\cal B}}$ be the bipartite graph on vertex set ${\cal A}\cup {\cal B}$ with edge set $\{\{A,B\}:\, A\in {\cal A},\, B\in {\cal B},\, A\subset B\}$. For a graph $G=(V,E)$, let $\nu(G)$ be its matching number, that is, the size of a maximum matching in $G$.

\begin{conj}\label{conj}
Let $d\ge 3$, $1\le k\le d-1$, and let $X$ be a $k$-dimensional simplicial complex on $n\ge d+2$ vertices. 
Then, 
\[
    {\rm rank}({\cal M}_{k,d}(X))= \max_{E} \nu(H_{E,X_k}),
\]
where the maximum is taken over all $E\subset X_1$ that are independent in the standard $d$-rigidity matroid.
\end{conj}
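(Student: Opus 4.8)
# Proof Proposal for Conjecture

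The plan is to establish the conjectured formula by matching an upper bound coming from the factorization \eqref{chainrule} against a lower bound established via a genericity argument on the Cayley--Menger Jacobian $\mathcal{C}(X,\dd)$. The key structural input is the factorization $\mathcal{B}(X,\p)=\mathcal{C}(X,\dd)\cdot R(G,\p)$, which lets me decouple the two sources of rank loss: the rigidity of the $1$-skeleton (captured by $R(G,\p)$) and the combinatorial incidence structure of edges inside $k$-faces (captured by $\mathcal{C}$).

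\textbf{Upper bound.} First I would prove that $\mathrm{rank}(\mathcal{M}_{k,d}(X))\le \max_E \nu(H_{E,X_k})$ for every $E\subset X_1$ independent in the $d$-rigidity matroid. By \eqref{chainrule}, for generic $\p$ the rank of $\mathcal{B}(X,\p)$ is at most the rank of $\mathcal{C}(X,\dd)$ composed onto the image of $R(G,\p)$. The image of $R(G,\p)$ is, generically, the span of the rows indexed by a maximal independent set $E$ in the $d$-rigidity matroid of $G=(X_0,X_1)$; restricting to these coordinates, I am left to bound the rank of the submatrix $\mathcal{C}_E$ of $\mathcal{C}$ using only the columns in $E$. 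The crucial observation is that $\mathcal{C}_{T,e}=0$ whenever $e\not\subset T$ (the squared $k$-volume of $T$ does not depend on the length of an edge not contained in $T$). Hence the support of $\mathcal{C}_E$ is exactly the bipartite incidence graph $H_{E,X_k}$, and the rank of any matrix is at most the maximum size of a set of nonzero entries no two of which share a row or column --- that is, the term rank, which equals $\nu(H_{E,X_k})$ by König's theorem. Taking the maximum over all independent $E$ gives the upper bound. Since $\mathrm{rank}(\mathcal{M}_{k,d}(X))$ is the \emph{generic} rank of $\mathcal{B}$, and genericity only helps, the bound follows.

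\textbf{Lower bound.} The reverse inequality is the substantial direction. Fix an independent $E\subset X_1$ attaining the maximum, together with a maximum matching $M$ in $H_{E,X_k}$ of size $\nu(H_{E,X_k})$; this matching pairs each face $T$ in some set $\mathcal{T}\subset X_k$ with a distinct edge $m(T)\in E\cap\binom{T}{2}$. I would aim to show that the corresponding $|\mathcal{T}|\times |\mathcal{T}|$ square submatrix of $\mathcal{B}(X,\p)$ --- with rows indexed by $\mathcal{T}$ and columns chosen appropriately from the $d$-coordinate blocks --- is generically nonsingular. The natural strategy is to exhibit a single specialized embedding $\p^*$ where the determinant of this submatrix is nonzero; generic nondegeneracy then follows because the determinant is a polynomial in the entries of $\p$. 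To build $\p^*$, I would use the factorization: it suffices to find $\p^*$ such that $\mathcal{C}_{\mathcal{T}}$ (rows $\mathcal{T}$, columns $E$) has rank $|\mathcal{T}|$ \emph{and} the relevant composite with $R(G,\p^*)$ preserves this rank. The matching $M$ makes $\mathcal{C}_{\mathcal{T}}$ have a transversal of nonzero diagonal entries (each $\mathcal{C}_{T,m(T)}\neq 0$ by Lemma~\ref{lem:nonzero}, provided the specialized embedding avoids the degenerate angle $t_{\pi/2}$); but a nonzero transversal alone does not force invertibility, so I must control the off-diagonal structure.

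\textbf{Main obstacle.} The hard part will be the lower bound, specifically ruling out unexpected algebraic cancellations in $\mathcal{C}_{\mathcal{T}}$ that could drop its rank below the term rank $|\mathcal{T}|$. Unlike in the proofs of Propositions~\ref{main_part1} and~\ref{main_part2}, where symmetry reduced $\mathcal{C}$ to a Gottlieb inclusion matrix with known full rank, here $\mathcal{T}$ and $E$ are arbitrary and no such clean identification is available. I anticipate two approaches. One is to choose $\p^*$ generically on a well-chosen lower-dimensional stratum (for instance, placing vertices so that the entries of $\mathcal{C}$ become algebraically independent transcendentals subject only to the forced zeros), reducing the claim to a purely combinatorial statement that a matrix with a prescribed zero pattern and otherwise independent entries has rank equal to its term rank --- which is a standard fact. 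The delicate point is that the nonzero entries of $\mathcal{C}$ are \emph{not} independent: the three-parameter symmetry in the proof of Proposition~\ref{main_part1} shows that entries are highly constrained (taking only values $\alpha,\beta,\gamma$ under symmetry), so I cannot freely treat them as generic. The second, more robust approach is to argue that the composite $\mathcal{B}=\mathcal{C}\cdot R(G,\p)$ achieves the term rank by a deletion/contraction or matroid-union argument, leveraging that $E$ is $d$-rigidity-independent to guarantee that $R(G,\p)$ does not collapse the columns indexed by $E$. Making either route rigorous --- in particular, certifying that the genericity of $\p$ survives passage through the nonlinear Cayley--Menger map without spurious rank loss --- is where the real work lies, and is presumably why the statement is posed as a conjecture rather than a theorem.
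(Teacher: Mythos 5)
This statement is posed in the paper as Conjecture~\ref{conj}: the paper contains no proof of it, only a proof of the necessity (``only if'') direction in the setting of Conjecture~\ref{conj2}, plus Example~\ref{ex1}. Your proposal is also not a proof: you explicitly leave the lower bound ${\rm rank}({\cal M}_{k,d}(X))\ge \max_E\nu(H_{E,X_k})$ open, and that is exactly the substantive content of the conjecture. Your diagnosis of why it is hard (the nonzero entries of ${\cal C}$ are algebraically dependent, so one cannot invoke ``zero pattern plus generic entries implies rank equals term rank'') is sound, but a proof sketch that defers the entire hard direction is a gap, not a proof.

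More importantly, the half you do claim to establish --- the upper bound --- is incorrectly argued. The image of $R(G,\p)$ is a ${\rm rank}(R)$-dimensional subspace of $\RR^{|X_1|}$, but it is \emph{not} the coordinate subspace indexed by a basis $E$ of the $d$-rigidity matroid; it is only the graph of a linear map $L:\RR^{E}\to\RR^{X_1\setminus E}$ (the projection onto the $E$-coordinates is injective on it, nothing more). Hence
\[
{\rm rank}({\cal B}(X,\p))={\rm rank}\left({\cal C}_E+{\cal C}_{X_1\setminus E}\,L\right),
\]
not ${\rm rank}({\cal C}_E)$, and the correction term ${\cal C}_{X_1\setminus E}\,L$ has no reason to respect the zero pattern of $H_{E,X_k}$, so the K\"onig/term-rank bound does not apply. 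Example~\ref{ex1} shows why something beyond submatrix-support reasoning is unavoidable: there ${\rm rank}({\cal C})={\rm rank}(R)=21$ yet ${\rm rank}({\cal B})=20$. The upper bound is nonetheless provable, by generalizing the paper's own argument for the ``only if'' direction of Conjecture~\ref{conj2}: take an independent set $S$ of rows of ${\cal B}(X,\p)$ with $|S|=r={\rm rank}({\cal M}_{k,d}(X))$; for every $S'\subseteq S$ the corresponding rows factor, via \eqref{chainrule} and the vanishing ${\cal C}_{T,e}=0$ for $e\not\subset T$, through the rigidity matrix of the $1$-skeleton of $X[S']$, so $|S'|$ is at most the $d$-rigidity rank of that $1$-skeleton; by Rado's theorem (cited in the paper) this Hall-type condition produces distinct edges $e_T\subset T$, $T\in S$, forming a set $E$ independent in the $d$-rigidity matroid, whence $\nu(H_{E,X_k})\ge r$. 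Replacing your upper-bound paragraph with this argument would make that direction correct; the lower bound remains genuinely open, which is why the statement is a conjecture.
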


In particular, if $|X_k|=dn-\binom{d+1}{2}$, then Conjecture \ref{conj} implies that $X$ is $k$-volume rigid in $\RR^d$ if and only if there exists $E\subset X_1$ of size $dn-\binom{d+1}{2}$ such that 
$G=(X_0,E)$ is minimally rigid in $\RR^d$, and
there exists a perfect matching between
$E$ and $X_k$ in the bipartite incidence graph $H_{E,X_k}$. 
By a classical result of Rado (\cite{rado1942theorem}; see also \cite{welsh1970matroid,welsh1971genHall}), this is equivalent to the following statement.

\begin{conj}\label{conj2}
Let $d\ge 3$, $1\le k\le d-1$, and 
let $X$ be a $k$-dimensional simplicial complex on $n\ge d+2$ vertices. Assume that $|X_k|=dn-\binom{d+1}{2}$. 
Then, $X$ is $k$-volume rigid in $\RR^d$ if and only if for every $S\subset X_k$, the 1-skeleton of the restriction $X[S]=\{\tau\in X:\, \tau\subset \sigma \text{ for some } \sigma\in S\}$ has rank at least $|S|$ in the standard $d$-rigidity matroid. 
\end{conj}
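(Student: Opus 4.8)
The plan is to identify ${\cal M}_{k,d}(X)$ with a matroid obtained by inducing the standard $d$-rigidity matroid of the $1$-skeleton through the edge--face incidence graph, and then to read off the balanced case via Rado's theorem. Write $M_R$ for the $d$-rigidity matroid on the edge set $X_1$ (the row matroid of $R(G,\p)$ for generic $\p$) and let $H=H_{X_1,X_k}$ be the bipartite incidence graph, so that an edge of $H$ records a relation $e\subset\sigma$. Let $M'$ be the matroid on $X_k$ induced from $M_R$ through $H$, in which $S\subseteq X_k$ is independent precisely when $H$ admits a matching saturating $S$ whose image is independent in $M_R$ (this is a matroid by the Perfect--Nash--Williams induction theorem). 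By Rado's theorem \cite{rado1942theorem}, its rank is $r_{M'}(S)=\min_{T\subseteq S}\bigl(|S\setminus T|+r_{M_R}(N_H(T))\bigr)$, and since the neighborhood $N_H(T)$ is exactly the edge set of the $1$-skeleton of $X[T]$, one has $r_{M'}(X_k)=|X_k|$ if and only if $r_{M_R}(\text{1-skeleton of }X[T])\ge|T|$ for every $T\subseteq X_k$. As $|X_k|=dn-\binom{d+1}{2}$ and, by Theorem \ref{main1}, $k$-volume rigidity of $X$ is exactly the statement that $X_k$ is a basis of ${\cal M}_{k,d}(X)$, the proposition reduces to the matroid identity ${\cal M}_{k,d}(X)=M'$, or at least to the equivalence ``$X_k$ independent in ${\cal M}_{k,d}(X)$ iff $X_k$ independent in $M'$.''

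The \emph{easy direction} (necessity) I would prove directly from the factorization \eqref{chainrule}. Suppose $X_k$ is independent in ${\cal M}_{k,d}(X)$ and fix $S\subseteq X_k$. The rows of ${\cal B}(X,\p)={\cal C}(X,\dd)\,R(G,\p)$ indexed by $S$ equal ${\cal C}_S\,R$, where ${\cal C}_S$ is supported only on the columns indexed by edges lying in some $\sigma\in S$, i.e.\ on the edges of $X[S]$. Hence these rows factor through the rows of $R$ indexed by those edges, giving $|S|={\rm rank}({\cal C}_S R)\le r_{M_R}(\text{1-skeleton of }X[S])$. This is the claimed Hall-type condition; the same computation shows every independent set of ${\cal M}_{k,d}(X)$ is independent in $M'$, and hence ${\rm rank}({\cal M}_{k,d}(X))\le \max_E \nu(H_{E,X_k})$, which is one direction of Conjecture \ref{conj}.

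The \emph{hard direction} (sufficiency) is the crux. Were the nonzero entries of ${\cal C}(X,\dd)$ algebraically independent over $\QQ$ --- that is, were ${\cal C}$ a generic matrix with support $H$ --- then ${\cal C}R$ would represent the induced matroid $M'$ over a suitable extension field, so the Hall condition would force ${\rm rank}({\cal B})=r_{M'}(X_k)=|X_k|$. The obstacle is precisely that these entries are highly dependent: by \eqref{Bexplicit} and the Cayley--Menger formula, every entry in the row of $\sigma$ is an algebraic function of the edge lengths of that single simplex, so the matrix ${\cal C}(X,\dd)$ traces out a proper subvariety of the support class as $\p$ varies. What must be shown is that ${\cal C}(X,\dd)$ nonetheless attains the generic rank of its support class after multiplication by $R$; equivalently, that the determinantal polynomial in $\p$ witnessing a full-rank $|X_k|\times|X_k|$ minor of ${\cal B}(X,\p)$ is not identically zero. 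Lemma \ref{lem:nonzero} already supplies the first required input, namely that for generic $\p$ the support of ${\cal C}$ is exactly $H$ (no entry vanishes off the right-angle locus).

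To establish the needed nonvanishing I would pursue two complementary routes, and I expect this to be where the real difficulty lies. The first is to exhibit, for a given $X$ satisfying the Hall condition, a single explicit embedding $\p$ at which ${\cal B}(X,\p)$ has rank $|X_k|$; by the genericity principle used throughout Section \ref{sec:main}, nonvanishing at one point propagates to generic $\p$. Natural candidates are the regular-simplex configurations from the base cases (Propositions \ref{main_part1} and \ref{main_part2}), assembled through a face- or vertex-build-up in the spirit of Lemma \ref{lem:addvertex}, so that full rank is maintained at each step. The second route is to argue intrinsically with self-stresses: a dependence $\sum_{\sigma}\lambda_\sigma{\cal B}_\sigma=0$ is equivalent to the edge-vector $\mu_e=\sum_{\sigma\supset e}\lambda_\sigma{\cal C}_{\sigma,e}$ being a self-stress of $(G,\p)$, and one would use the independent transversal furnished by Rado's theorem to triangulate this system and conclude $\lambda=0$. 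In both routes the main obstruction is controlling the interaction between the rigidity self-stresses of $(G,\p)$ and the specific volume-derivative coefficients ${\cal C}_{\sigma,e}$ rather than generic ones; this coupling is exactly what blocks a direct appeal to the representability of induced matroids, and it is the step I expect to demand the most work.
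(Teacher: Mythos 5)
The statement is a conjecture, and the paper itself proves only its ``only if'' direction; your proposal establishes exactly the same partial result by essentially the same argument --- factoring the rows of ${\cal B}(X,\p)$ indexed by $S$ through the rigidity matrix of the $1$-skeleton of $X[S]$ via \eqref{chainrule} and using that matrix multiplication cannot increase rank --- together with the same Rado-theorem reformulation linking it to Conjecture \ref{conj}. The sufficiency direction you correctly flag as the crux (the entries of ${\cal C}(X,\dd)$ are algebraically dependent, blocking a direct appeal to representability of induced matroids) is left open in the paper as well, so your treatment is as complete as the paper's own and takes essentially the same approach.
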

Note that the ``only if" direction of the conjecture holds. 
Indeed, for a subset $S\subset X_k$ and a generic $\p$, consider the $|S|\times d|X_0|$ submatrix $Q$ of ${\cal B}(X,\p)$ corresponding to the rows of $S$. On the one hand, $\left({\cal C}(X,f_{X_1}(\p))\right)_{\sigma,e}=0$ for every edge $e\notin X[S]$ and every $k$-simplex $\sigma\in S$. Therefore, using \eqref{chainrule}, we find that $Q$ is a product of a submatrix of ${\cal C}(X,f_{X_1}(\p)))$ and the standard $d$-rigidity matrix of the $1$-skeleton of $X[S]$. On the other hand, assuming that $|X_k|=dn-\binom{d+1}{2}$ and $X$ is $k$-volume rigid in $\RR^d$, we have ${\rm rank}(Q)=|S|$. The ``only if" direction follows since matrix multiplication does not increase the rank.

\begin{example}\label{ex1}
Let $Y$ be the simplicial complex obtained from the complete $2$-dimensional complex on $5$ vertices by removing a single triangle. Let $Z$ be obtained by gluing together two copies of $Y$ along an edge. Any such $Z$ has 8 vertices and 18 triangles. Note that the graph $G=(Z_0,Z_1)$ is not generically rigid in $\RR^3$, as one can fix one copy of $Y$ and rotate the other copy along the common edge. The same motion also shows that $Z$ is not $2$-volume rigid in $\mathbb R^3$. Let $a,b,c$ be the vertices in $Z$ unique to the first copy of $Y$, and let $a',b',c'$ be the vertices in $Z$ unique to the second copy. Let $v$ be a new vertex not in $Z$, and let $X$ be the union of $Z$ and the 3 triangles $\{a,a',v\}, \{b,b',v\}$, and $\{c,c',v\}$. Let $\p$ be a generic embedding of $X_0$ in $\RR^3$, and let $G'=(X_0,X_1)$. Then, it is not hard to verify that ${\rm rank}(R(G',\p))=21={\rm rank}(
{\cal C}(X,f_{X_1}(\p)))$, but ${\rm rank}({\cal B} (X,\p))=20$, so while the $1$-skeleton of $X$ is generically rigid in $\RR^3$, $X$ is not 2-volume rigid in $\RR^3$. The Hall condition mentioned above indeed fails here: letting $S$ be the collection of triangles in $X$ not containing $v$, it is easy to check that the $1$-skeleton of $X[S]$, which is the graph $G=(Z_0,Z_1)$, has rank 17 in the standard $3$-rigidity matroid, but $|S|=18>17$.   
\end{example}

It is natural to wonder about the rank of the matrix ${\cal C}(X,\dd)$ for a generic vector $\dd\in\RR^{|X_1|}$. The following is a special case of Conjecture~\ref{conj}.
\begin{conj}
Let $2\le k\le n-2$, and let $X$ be a $k$-dimensional simplicial complex on $n$ vertices.
Then, for generic $\dd\in\RR^{|X_1|}$,
\[
{\rm rank}({\cal C}(X,\dd))=\nu(H_{X_1,X_k}).
\]
\end{conj}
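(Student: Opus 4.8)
The plan is to establish matching upper and lower bounds on ${\rm rank}({\cal C}(X,\dd))$; the upper bound is routine, while the lower bound is the crux.

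\textbf{Upper bound via term rank.} First I would determine the support pattern of ${\cal C}(X,\dd)$. By definition ${\cal C}(X,\dd)_{\sigma,e}=0$ whenever $e\not\subset\sigma$, while for $e\subset\sigma$ the computation in the proof of Lemma~\ref{lem:nonzero} shows that, viewed as a polynomial in $\dd$, one has ${\cal C}(X,\dd)_{\sigma,e}=2A_{\sigma,e}\,\dd_e+B_{\sigma,e}$, with leading coefficient $A_{\sigma,e}$ a nonzero constant multiple of ${\rm vol}_{k-2}^2(\sigma\setminus e)$, which is not the zero polynomial. Hence this entry is a nonzero polynomial, and so nonzero for generic $\dd$. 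Consequently, for generic $\dd$ the bipartite graph recording the nonzero entries of ${\cal C}(X,\dd)$ (rows indexed by $X_k$, columns by $X_1$) is exactly $H_{X_1,X_k}$. Since the rank of any matrix is at most its term rank, and by K\"onig's theorem the term rank equals the maximum matching number of this bipartite graph, we obtain ${\rm rank}({\cal C}(X,\dd))\le \nu(H_{X_1,X_k})$.

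\textbf{The free-matroid regime and the complete complex.} Next I would record the geometric meaning of the statement. For a generic $\p\in(\RR^{n-1})^{n}$ the $(n-1)$-rigidity matroid on $n$ vertices is free, so $R(G,\p)$ has full row rank $|X_1|$ and is surjective; by \eqref{chainrule} this gives ${\rm rank}({\cal C}(X,\dd))={\rm rank}({\cal B}(X,\p))$ for the induced vector $\dd=f_{X_1}(\p)$, which is Zariski-generic in $\RR^{|X_1|}$. Thus the generic rank of ${\cal C}(X,\cdot)$ equals ${\rm rank}({\cal M}_{k,n-1}(X))$, so the present statement is the content of Conjecture~\ref{conj} in the free-matroid regime $d\ge n-1$. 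In particular the lower bound already holds for $X=\Delta_{n,k}$: at a regular $(n-1)$-simplex all edge lengths are equal, so by symmetry and Lemma~\ref{lem:nonzero} the matrix ${\cal C}(\Delta_{n,k},\dd)$ is a nonzero multiple of the inclusion matrix $A^{n}_{k+1,2}$, whose rank $\min\{\binom{n}{2},\binom{n}{k+1}\}$ (Gottlieb~\cite{gottlieb1966certain}) equals $\nu(H_{X_1,X_k})$ in this case. Hence the real task is the lower bound for an arbitrary, possibly non-complete, complex.

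\textbf{Lower bound and the main obstacle.} Fix a maximum matching $\phi\colon S_0\to E_0$ in $H_{X_1,X_k}$, pairing each $\sigma\in S_0\subset X_k$ with an edge $\phi(\sigma)\in E_0\subset X_1$ satisfying $\phi(\sigma)\subset\sigma$. Since the generic rank is the maximal rank, it suffices to exhibit a single nonvanishing maximal minor, and by the support pattern any such minor is supported on a matching; so the goal reduces to showing that the $\nu\times\nu$ minor $\det {\cal C}(X,\dd)[S_0,E_0]$ is not the zero polynomial. Its diagonal term $\prod_{\sigma\in S_0}{\cal C}(X,\dd)_{\sigma,\phi(\sigma)}$ is a product of nonzero polynomials, hence nonzero, but a priori it could be cancelled by the remaining terms of the Leibniz expansion. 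Ruling out this cancellation is the main obstacle, and the difficulty is genuine: the entries of ${\cal C}$ are not independent indeterminates but specific Cayley--Menger derivatives, so without further input the generic rank could in principle fall below the term rank.

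\textbf{Proposed attack.} I would try to certify non-cancellation by a valuation (tropical) argument: substitute $\dd_e=s^{w_e}$ for generic real weights $w_e$ and analyze the order in $s$ of $\det {\cal C}(X,\dd)[S_0,E_0]$, which is governed by $\min_{\pi}\sum_{\sigma}{\rm val}({\cal C}_{\sigma,\pi(\sigma)})$ over matchings $\pi$ and is attained with a nonzero leading coefficient whenever the optimizing $\pi$ is unique. The aim is to choose $\phi$ together with the weights so that this optimum is uniquely achieved; here the linearity of each entry in its own variable $\dd_e$, with explicit leading coefficient ${\rm vol}_{k-2}^2(\sigma\setminus e)$, should be exploited by peeling off a lowest-weight matched edge and recognizing the resulting cofactor as a ${\cal C}$-type matrix of a smaller complex, enabling an induction on $k$ or on $|S_0|$. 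A complementary route is to prove the stronger ``matching-genericity'' statement that, for generic $\dd$, every submatrix of ${\cal C}(\Delta_{n,k},\dd)$ has rank equal to its own term rank, and then deduce the general case since ${\cal C}(X,\dd)$ is the submatrix of ${\cal C}(\Delta_{n,k},\dd)$ indexed by $X_k$ and $X_1$. In every approach the difficulty is the same: controlling the algebraic dependencies among the Cayley--Menger derivatives tightly enough to guarantee that the chosen maximal matching minor does not vanish.
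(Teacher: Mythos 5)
The statement you were asked to prove is not a theorem of the paper: it appears there as a conjecture, and the paper's entire ``proof'' content consists of showing that it is a special case of Conjecture~\ref{conj}, obtained by taking $d=n-1$. Concretely, the paper argues exactly as in your ``free-matroid regime'' paragraph: for a generic embedding $\p$ of $X_0$ in $\RR^{n-1}$ the rigidity matrix $R(G,\p)$ has full row rank $|X_1|$, hence is surjective, so by \eqref{chainrule} the generic rank of ${\cal C}(X,\cdot)$ equals the generic rank of ${\cal B}(X,\cdot)$, i.e.\ ${\rm rank}({\cal M}_{k,n-1}(X))$; and since every $E\subset X_1$ is independent in the $(n-1)$-rigidity matroid, the right-hand side of Conjecture~\ref{conj} becomes $\nu(H_{X_1,X_k})$. (Both you and the paper quietly ignore that Conjecture~\ref{conj} is formally stated only for $n\ge d+2$, which fails when $d=n-1$; one must read it as the natural extension to that range.) On this reduction your proposal and the paper coincide. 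Beyond it, your proposal contains \emph{more} unconditional content than the paper records: the term-rank upper bound ${\rm rank}({\cal C}(X,\dd))\le\nu(H_{X_1,X_k})$ is correct as you argue it---entries with $e\not\subset\sigma$ vanish identically, entries with $e\subset\sigma$ are the nonzero polynomials $2A_{\sigma,e}\dd_e+B_{\sigma,e}$ by the computation in Lemma~\ref{lem:nonzero}, and rank is bounded by term rank---and your verification for $X=\Delta_{n,k}$ via Gottlieb's theorem correctly reproduces the computation behind Propositions~\ref{main_part2} and \ref{prop:k=d-1,n=d+1}.

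The genuine gap is the one you name yourself: the lower bound for an arbitrary complex, i.e., exhibiting a nonvanishing $\nu\times\nu$ minor supported on a maximum matching of $H_{X_1,X_k}$, is not established. Your tropical/valuation attack is a program, not an argument: you give no mechanism forcing the weight-minimizing matching to be unique, and no proof that the cofactor obtained by peeling off a matched edge is again a matrix of the same type, so the proposed induction does not close. This gap cannot be repaired by consulting the paper, because the paper does not close it either---the statement is open, and the paper asserts it only conditionally on Conjecture~\ref{conj}. Your write-up should therefore be presented as what it is: an unconditional proof of one inequality, a correct reduction showing that equality follows from Conjecture~\ref{conj}, and a research plan for the remaining open direction---not a proof of the statement.
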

 To see that this is indeed a special case, suppose that Conjecture~\ref{conj} is true for $d=n-1$. Let $G=(X_0,X_1)$ be the $1$-skeleton of $X$. Then, for a generic embedding $\p$ of $X_0$ in $\RR^{n-1}$, we have ${\rm rank}(R(G,\p))=|X_1|$, and therefore the image of $R(G,\p)$ is $\RR^{|X_1|}$. Combined with \eqref{chainrule}, we see that in this case
$$
{\rm rank}({\cal B}(X,\p))={\rm rank}({\cal C}(X,\dd)),$$
where $\dd=f_{X_1}(\p)$. 
Note that $f_{X_1}((\RR^d)^{n})$ is an open subset of $\RR^{|X_1|}$, and so in fact we have
$$
\max_{\p\in(\RR^d)^n}{\rm rank}({\cal B}(X,\p))
=
\max_{\dd\in\RR^{|X_1|}}{\rm rank}({\cal C}(X,\dd)).
$$
In other words, the generic rank of ${\cal C}(X,\dd)$ is equal to the generic rank of ${\cal B}(X,\p)$. 
Finally, the claim follows from Conjecture \ref{conj}, noting again that, since $d=n-1$, every $E\subset X_1$ is independent in the standard $d$-rigidity matroid.

Last, let us mention a relation between the $(k,d)$-volume rigidity matrix $\mathcal{B}(X,\p)$ studied here and a similar matrix studied by Lee in \cite{lee1996pl}. Let $L(X,\p)$ be the $|X_k|\times d|X_{k-1}|$ matrix defined by\footnote{The matrix $L(X,\p)$ is denoted in \cite{lee1996pl} by $R$ (see  \cite[p. 405]{lee1996pl}).}
\begin{equation}\label{eq:lee}
L(X,\p)_{\sigma,\tau}=\begin{cases}
        h_{\sigma,\tau} & \tau\subset \sigma,\\
        0 & \text{otherwise,}
    \end{cases}
\end{equation}
for every $\sigma\in X_k$ and $\tau\in X_{k-1}$, where for $\tau\subset \sigma$, denoting the unique vertex in $\sigma\setminus\tau$ by $v$, 
$h_{\sigma,\tau}\in\mathbb{R}^d$ is the altitude vector of the simplex $\p(\sigma)$ with respect to $\p(v)$ (pointing from $\p(\tau)$ to $\p(v)$).  
Note that, for $\tau=\sigma\setminus\{v\}$,  $h_{\sigma,\tau}=({\rm vol}_{k}(\sigma) k!/{\rm vol}_{k-1}(\tau)) N_{\sigma,v}$. 
It is then not hard to check, using \eqref{eq:lee}, \eqref{Bexplicit}, and the fact that for every $k$-simplex $\sigma$ we have
\[
    \sum_{v\in \sigma} {\rm vol}_{k-1}(\sigma\setminus \{v\}) N_{\sigma,v}=0,
\]
that
\[
    \mathcal{B}(X,\p)= -\frac{2}{(k!)^2}  L(X,\p) \cdot D(X,\p)\cdot P(X),
\]
where $D(X,p)\in \mathbb{R}^{|X_{k-1}|\times |X_{k-1}|}$ is the diagonal matrix defined by $
    D(X,\p)_{\tau,\tau}= {\rm vol}_{k-1}(\tau)^2
$ 
for all $\tau\in X_{k-1}$, and $P(X)$ is the $d|X_{k-1}|\times d|X_0|$ matrix where the $d\times d$ block indexed by $(\tau,v)$ equals $I_d$ (the $d\times d$ identity matrix) if $v\in \tau$ and $0$ otherwise, for every $\tau\in X_{k-1}$ and $v\in X_0$. We do not pursue this relation further in this work.

\begin{remark}
    Let us mention that the notion of $(k,d)$-volume rigidity was recently and independently introduced by James Cruickshank, Bill Jackson and Shin-ichi Tanigawa in \cite{cruickshank2025volume}. 
    In particular, our Proposition 
    \ref{main_part2} was independently proven in \cite[Theorem 5]{cruickshank2025volume}, by different techniques.
    We thank Bill, James and Shin-ichi for sharing a draft of their preprint just before both articles were submitted to arXiv, and for suggesting a better name for Lemma 3.
\end{remark}

\bibliographystyle{abbrv}
\bibliography{biblio}

\end{document}